\numberwithin{equation}{section}
\newtheorem{theorem}{Theorem}[section]
\newtheorem{corollary}[theorem]{Corollary}
\newtheorem{lemma}[theorem]{Lemma}
\theoremstyle{definition} 
\newtheorem{definition}[theorem]{Definition}
\theoremstyle{remark}  
\newtheorem{remark}[theorem]{Remark}
\newcommand{\R}{\mathbb{R}}
\newcommand{\Rn}{\R^n}
\newcommand{\il}{\Delta_{\infty}}
\newcommand{\ilN}{\Delta_{\infty}^N}
\newcommand{\ue}{u^{\epsilon}}
\newcommand{\Ve}{V^{\epsilon}}
\newcommand{\la}{\langle}
\newcommand{\ra}{\rangle}
\newcommand{\Om}{\Omega}
\newcommand{\loc}{\textnormal{loc}}
\DeclareMathOperator{\spt}{spt\,}
\DeclareMathOperator{\diverg}{div\,}
\newcommand{\abs}[1]{\left|#1\right|}
\newcommand{\norm}[1]{\left|\left|#1\right|\right|}
\newcommand{\divt}{\operatorname{div}}
\newcommand{\vp}{\varphi}
\newcommand{\ud}{\, d}
\newcommand{\half}{{\frac{1}{2}}}
\newcommand{\kom}[1]{}
\renewcommand{\kom}[1]{{\bf [#1]}}
\newcommand{\eps}{{\varepsilon}}
\begin{document}
\title[]{On the second order regularity of solutions to the parabolic $p$-Laplace equation}

\author{Yawen Feng}
\address{Department of Mathematics and Statistics, University of
Jyv\"askyl\"a, PO~Box~35, FI-40014 Jyv\"askyl\"a, Finland}
\email{yawen.y.feng@jyu.fi}
\email{mikko.j.parviainen@jyu.fi}

\author{Mikko Parviainen}
\address{}
\email{}

\author{Saara Sarsa}
\address{Department of Mathematics and Statistics, University of Helsinki, PO~Box~68, (Pietari Kalmin katu 5), FI-00014 University of Helsinki, Finland}
\email{saara.sarsa@helsinki.fi}

\subjclass[2010]{35K65, 35K67, 35B65}
\keywords{$p$-parabolic functions, weak solutions, fundamental inequality, Sobolev regularity, time derivative}
\begin{abstract}
In this paper, we study the second order Sobolev regularity of solutions to the parabolic $p$-Laplace equation. For any $p$-parabolic function $u$, we show that $D(\abs{Du}^{\frac{p-2+s}{2}}Du)$ exists as a function and belongs to  $L^{2}_{\text{loc}}$ with $s>-1$ and $1<p<\infty$. The range of $s$ is sharp.
\end{abstract}

\maketitle
\section{Introduction}
The elliptic $p$-Laplace equation has an extensive literature on the second order regularity. In contrast, the second order regularity for the  parabolic $p$-Laplace equation
\begin{equation}\label{eq:p-parab1}
    u_t=  {\rm div}(|Du|^{p-2}Du)
\end{equation}
 is much less studied. Throughout the paper we have $1<p<\infty$. In the elliptic case, one of the known estimates shows $W_{\text{loc}}^{1,2}$ regularity for the nonlinear expression of the gradient
 $$ |Du|^{\frac{p-2+s}2}Du $$
 proven by Dong, Peng, Zhang and Zhou \cite{dongpzz20} with $s>2-\min\{p+\frac n{n-1}, 3+\frac{p-1}{n-1}\}$, and then extended to $s>-1-\frac{p-1}{n-1}$ by the third author \cite{sarsa20}. The aim of this paper is to prove such a result to the parabolic $p$-Laplace equation. In other words, we prove in Theorem \ref{thm:MainTheorem} that for any weak or viscosity solution $u$ to \eqref{eq:p-parab1}, $D(\abs{Du}^{\frac{p-2+s}{2}}Du)$
exists, belongs to  $L^{2}_{\text{loc}}$ whenever $s>-1$, and this range is sharp.

In the parabolic case, Dong, Peng, Zhang and Zhou \cite{dongpzz20} proved for $p\in(1,3)$ that the weak or viscosity solution $u$ to \eqref{eq:p-parab1} locally belongs to $W^{2,2}$. This result is obtained as a special case from ours by selecting $s=2-p$.  Our result also contains as a special case Lindqvist's \cite{lindqvist08} result for  $ |Du|^{p-2}Du$ and $ |Du|^{\frac{p-2}{2}}Du$ in the range $p\geq2$. As a consequence, he also observed that the time derivative $u_t$ exists and belongs to a Sobolev space.  See also a recent paper by Cianchi and Maz'ya \cite{cianchi19}.

 The heuristic idea of the proof is to differentiate the equation \eqref{eq:p-parab1}, choose a test function $\vp=\abs{Du}^{s}u_{x_k}\phi^2$ and use a fundamental inequality (the name stems from \cite{dongpzz20} for a related inequality)
\begin{align*}
|Du |^4|D^2u|^2\geq2|Du|^2|D^2u Du|^2+\frac{(|Du|^2\Delta u-\Delta_\infty u)^2}{n-1}-(\Delta_\infty u)^2
\end{align*}
from \cite{sarsa20}, which holds for any smooth function. Here  $\Delta u:=\sum_{i=1}^n u_{x_ix_i}$ denotes the Laplacian, $\Delta_\infty u:=\sum_{i,j=1}^n u_{x_i x_j} u_{x_i}u_{x_j}$ the infinity Laplacian, and $|D^2u|:=(\sum_{i,j=1}^n u_{x_i x_j}^2)^{1/2}$ the Hilbert-Schmidt norm for matrices.
Surprisingly, it is sufficient for the sharp result to use the previous inequality in a rather simple form
\begin{align}
\label{eq:simple-fund-ineq}
|Du |^4|D^2u|^2\geq2|Du|^2|D^2u Du|^2 -(\Delta_\infty u)^2,
\end{align}
which we obtain by an elementary fact that the square $(|Du|^2\Delta u-\Delta_\infty u)^2$ is nonnegative. Naturally the fact that the form \eqref{eq:simple-fund-ineq} is sufficient, simplifies the proof. At the same time, the form of  \eqref{eq:simple-fund-ineq} makes the coefficient $C=C(p,s)$ of estimate \eqref{eq:QuantitativeBoundforSecondDerivatives} in Theorem \ref{thm:MainTheorem} independent of $n$. Note that in the elliptic case, the fundamental inequalities in \cite{dongpzz20,sarsa20} include the parameter $n$ so that both the range of $s$ and $C$ depend on $n$.

Unlike the second order regularity, the lower order regularity of the parabolic $p$-Laplace equation has been extensively studied since the 1980s, see DiBenedetto's monograph \cite{dibenedetto93} as well as for example \cite{bogeleindm13, dibenedettogv08, dibenedettogv12, kinnunenl00, kuusi08, urbano08, vazquez06}. In the elliptic case, the second order Sobolev regularity has been studied in addition to above mentioned \cite{dongpzz20} for example in \cite{attouchir18, bojarskii84, manfrediw88}, and for a different parabolic equation in \cite{lindqvisth20}.

\section{Preliminaries and main results}

Let $x_0\in \R^{n}$, $n\geq1$ and $r>0$. We denote by  $$B_r(x_0)=\{x \in \R^{n} : |x_0-x| <r\}$$
the usual Euclidean ball in $\R^{n}$.
For a space-time point $(x_0,t_0)\in\R^{n+1}$ and a radius $r>0$, we define the parabolic cylinder as
$$ Q_r(x_0,t_0):=B_r(x_0)\times(t_0-r^2,t_0+r^2). $$
To ease the notation, we may write $Q_r:=Q_r(x_0,t_0)$.
Let $\Om\subset\Rn$ denote an open domain. For $T>0$, we set
$$ \Om_T:=\Om \times(0,T). $$
If $U$ is compactly contained in $\Om$ i.e.\ $U\subset \Om$ and the closure of $U$ is a compact subset of  $\Om$, we write  $U \Subset \Om$ . For $0<t_1<t_2<\infty$, we set
$$ U_{t_1,t_2}:=U\times (t_1,t_2). $$

We denote the first partial derivatives of a function $u\colon\Om_T\to\R$ by $u_{x_k}$ and $u_{t}$. The spatial gradient is denoted by $Du$, and the second derivatives by $u_{x_i x_j}$. Further, $D^2 u$ stands for matrix of second derivatives with respect to the space variables.
As usual, the Sobolev space $W^{1,p}(U)$ denotes the space of measurable functions $u$ such that $u \in L^p(U)$ and the distributional first partial derivatives $u_{x_i}$ exist in $U$ and belong to $L^p(U)$. We use the norm
$$ \|u\|_{W^{1,p}(U)} = \|u\|_{L^p(U)} + \|D u\|_{L^p(U)}. $$
By the \emph{parabolic Sobolev space} $L^p(t_1,t_2;W^{1,p}(U))$, with $0<t_1<t_2<\infty$, we mean the space of measurable functions $u(x,t)$ such that the mapping $x \mapsto u(x,t)$ belongs to $W^{1,p}(U)$ for almost every $t_1 < t < t_2$ and the norm
\begin{equation*}
	\| u \|_{L^p(t_1,t_2;W^{1,p}(U))} := \biggl(\int_{t_1}^{t_2} \|u(\cdot, t)\|_{W^{1,p}(U)}^p \, dt\biggr)^{1/p}
\end{equation*}
is finite.
The space $C(\Om _T)$ denotes the space of continuous functions on $\Om _T$ and $C_0^\infty(\Om _T)$ denotes the space of smooth, compactly supported functions on $\Om_T$. A function belongs
to the local Sobolev space $W^{1,p}_{\text{loc}}(\Om)$ if it belongs to $W^{1,p}(\Om')$ for every open $\Om' \Subset \Om$. Other local spaces are defined analogously.

We study weak solutions to the parabolic $p$-Laplace equation
\begin{equation} \label{eq:p-parab}
    u_t-\Delta_p u=0 \quad\text{in }\Om _T,
\end{equation}
where
$$ \Delta_p u:=\diverg\big(|Du|^{p-2}Du\big) $$
is the $p$-Laplace operator with $1<p<\infty$.

\begin{definition}
A function $u:\Om_T \to [-\infty,\infty]$ is a
\emph{weak solution} to equation \eqref{eq:p-parab}
if whenever $U_{t_1,t_2} \Subset \Om _T$ is an open cylinder,
we have
$u \in C(U_{t_1,t_2} )\cap L^{p}(t_1,t_2;W^{1,p}(U))$, and
 $u$ satisfies the integral equality
\[
\int_{0}^{T}\int_{\Om} \abs{D u}^{p-2} \la  D u,
D\phi \,\ra  dx\,dt - \int_{0}^{T}\int_{\Om} u
\phi_t \, dx\,dt   =  0
\quad \text{for all }\phi \in C_0^\infty(\Om _T).
\]
Such solutions are called  \emph{$p$-parabolic functions}.

\end{definition}

Under the above definition, weak solutions are equivalent to viscosity solutions to \eqref{eq:p-parab} for $1<p<\infty$, see \cite{juutinenlm01,parviainenv20,siltakoski21}. In this setting, gradients are bounded and H\"{o}lder continuous by a recent work of Imbert, Jin and Silvestre \cite{imbertjs19}.  Their result covers a more general class of equations containing the parabolic $p$-Laplace equation and the normalized $p$-parabolic equation arising from the game theory \cite{manfredipr10}.
For earlier $C^{1,\alpha}$-regularity results based on the variational approach with various assumptions, see DiBenedetto and Friedman \cite{dibenedettof85}, Wiegner \cite{wiegner86}, Chen \cite{chen87}, and Chapter~IX in \cite{dibenedetto93}.

\subsection{Main results}
\quad

For $s\in\R$, we define the vector field $V_s\colon\Rn\to\Rn$ as
\begin{equation}
V_s(z):=
    \begin{cases}
    |z|^{\frac{p-2+s}{2}}z \quad &\text{for }z\in \Rn\setminus\{0\}; \\
    0 \quad&\text{for }z=0.
    \end{cases}
\end{equation}

\begin{theorem} \label{thm:MainTheorem}
Let $u\colon\Om _T\to\R$ be a weak solution to the parabolic $p$-Laplace equation \eqref{eq:p-parab}. If $s>-1$, then $ D(V_s(Du)) $ exists and belongs to $L^{2}_\loc(\Om _T)$.
Moreover, we have the estimate
\begin{align} \label{eq:QuantitativeBoundforSecondDerivatives}
    \int_{Q_r} |D(V_s(Du))|^2dxdt
    \leq \frac{C}{r^2}\Big(
    \int_{Q_{2r}} |V_s(Du)|^2dxdt
    +\int_{Q_{2r}} |Du|^{s+2}dxdt
    \Big)
\end{align}
where $C=C(p,s)>0$  and $Q_r\subset Q_{2r}\Subset\Om _T$ are concentric parabolic cylinders.
\end{theorem}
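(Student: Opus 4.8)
The plan is to prove the estimate first for smooth approximations and then pass to the limit. I would fix a cylinder $Q_{2r}\Subset\Om_T$ and regularize \eqref{eq:p-parab} by the non-degenerate equations
\[
\ue_t=\diverg\big((\eps+\abs{D\ue}^2)^{\frac{p-2}{2}}D\ue\big),
\]
whose solutions, obtained by standard parabolic approximation, are smooth in space and $C^1$ in time with $\ue\to u$ locally uniformly (using the uniform $C^{1,\alpha}$ gradient bounds available in this setting). For such $\ue$ all the manipulations below are classical, and the $\eps$-uniform gradient bounds are what let me return to $u$ at the end.

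For the smooth solution I would differentiate the equation in the direction $x_k$, which gives $\partial_t\ue_{x_k}=\partial_{x_i}(a^\eps_{ij}\ue_{x_jx_k})$ with
\[
a^\eps_{ij}=(\eps+\abs{D\ue}^2)^{\frac{p-2}{2}}\delta_{ij}+(p-2)(\eps+\abs{D\ue}^2)^{\frac{p-4}{2}}\ue_{x_i}\ue_{x_j},
\]
and then test with $\vp_k=(\eps+\abs{D\ue}^2)^{s/2}\ue_{x_k}\phi^2$, summing over $k$, where $\phi$ equals one on $Q_r$ and is compactly supported in $Q_{2r}$. The parabolic term becomes $\int\half\,\partial_t\abs{D\ue}^2(\eps+\abs{D\ue}^2)^{s/2}\phi^2$; introducing the primitive whose derivative is $\half(\eps+\tau)^{s/2}$ and integrating by parts in $t$ turns it into a term bounded by $r^{-2}\int_{Q_{2r}}\abs{D\ue}^{s+2}$, where $s>-1$ keeps the primitive under control. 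Integrating the elliptic term by parts in $x_i$ and expanding $\partial_{x_i}\vp_k$, the terms in which the derivative hits $\phi^2$ are estimated by Young's inequality against the principal part and produce the contribution $r^{-2}\int_{Q_{2r}}\abs{\Ve(Du)}^2$, while the principal part is the quadratic form
\[
P^\eps:=\int\Big[(\eps+\abs{D\ue}^2)^{\frac{p-2+s}{2}}\abs{D^2\ue}^2+(p-2+s)(\eps+\abs{D\ue}^2)^{\frac{p-4+s}{2}}\abs{D^2\ue\,D\ue}^2+s(p-2)(\eps+\abs{D\ue}^2)^{\frac{p-6+s}{2}}(\il\ue)^2\Big]\phi^2.
\]

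The decisive step, and the one I expect to be the main obstacle, is to show that $P^\eps$ controls $\int\abs{D(\Ve(D\ue))}^2\phi^2$ from below. A direct differentiation gives $\abs{D(\Ve(D\ue))}^2=(\eps+\abs{D\ue}^2)^{\frac{p-2+s}{2}}\abs{D^2\ue}^2+\beta(\beta+2)(\eps+\abs{D\ue}^2)^{\frac{p-4+s}{2}}\abs{D^2\ue\,D\ue}^2$ with $\beta=\frac{p-2+s}{2}$, so, writing $A,B,C$ for the three nonnegative weighted quantities in $P^\eps$ (those multiplying $1$, $(p-2+s)$ and $s(p-2)$ respectively), the claim reduces to the pointwise inequality
\[
A+(p-2+s)B+s(p-2)C\ \geq\ c\,\big(A+\beta(\beta+2)B\big)
\]
for a constant $c=c(p,s)>0$. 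Here I would invoke the simple fundamental inequality \eqref{eq:simple-fund-ineq}, which after carrying the common weight reads $A\geq 2B-C$, together with the Cauchy--Schwarz bounds $0\leq C\leq B$. These three relations cut out a polyhedral cone whose extreme rays are $(A,B,C)=(1,0,0)$, $(2,1,0)$ and $(1,1,1)$, so the inequality needs only be checked there. The rays $(1,0,0)$ and $(2,1,0)$ impose no restriction beyond $\beta>-1$, i.e.\ $s>-p$, which is automatic, whereas testing the ray $(1,1,1)$ gives the numerator $1+2\beta+s(p-2)=(p-1)(1+s)$, so a positive $c$ exists precisely when $(p-1)(1+s)>0$, i.e.\ when $s>-1$; the constant degenerates as $s\downarrow-1$, which is the source of the sharp range. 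Choosing $c$ as the minimum of the three ratios completes the pointwise bound, and combining it with the identity $P^\eps=-(\text{time term})-(\text{cutoff terms})$ yields \eqref{eq:QuantitativeBoundforSecondDerivatives} for $\ue$ with a constant independent of $\eps$ and of $n$.

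Finally I would let $\eps\to0$. The $\eps$-uniform bound makes $\Ve(D\ue)$ bounded in $L^2(t_1,t_2;W^{1,2})$ on compact subcylinders, so along a subsequence $D(\Ve(D\ue))$ converges weakly in $L^2_\loc$; since the uniform $C^{1,\alpha}$ estimates give $D\ue\to Du$ locally uniformly and hence $\Ve(D\ue)\to V_s(Du)$ in $L^2_\loc$, the weak limit is identified as the distributional derivative $D(V_s(Du))$. Weak lower semicontinuity of the $L^2$ norm then transfers the estimate to $u$, giving both the existence of $D(V_s(Du))$ as an $L^2_\loc$ function and \eqref{eq:QuantitativeBoundforSecondDerivatives}. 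The only delicate point in this passage is the behaviour of the weights on $\{Du=0\}$, but since $s>-1$ the exponents remain subcritical and the limit identification goes through without an extra contribution.
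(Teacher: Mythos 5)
Your proposal is correct and follows the same overall strategy as the paper: the same regularization $\mu=\sqrt{\abs{D\ue}^2+\eps}$, the same test function $\mu^{s}\ue_{x_k}\phi^2$, the same reduction to a pointwise lower bound for the quadratic form $\sigma$ with coefficients $1$, $p-2+s$, $s(p-2)$, and the same compactness argument for $\eps\to0$ via the uniform $C^{1,\alpha}$ gradient bounds. Where you genuinely differ is in how the pointwise bound is established. The paper splits $D\abs{D\ue}$ into a tangential part $D_T\abs{D\ue}$ and the normal part $\ilN\ue$, and then regroups every coefficient according to powers of $\eps/\mu^2$ so as to verify nonnegativity term by term; you instead observe that $(A,B,C)$ lies in the simplicial cone cut out by $A\geq 2B-C$ and $0\leq C\leq B$, and check the linear inequality on the three extreme rays $(1,0,0)$, $(2,1,0)$, $(1,1,1)$. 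This is a clean alternative that makes the sharpness of $s>-1$ visible at a glance (it is exactly the ray $(1,1,1)$ that produces the factor $(p-1)(1+s)$), and it bypasses the paper's intermediate step of first proving $\sigma\geq\lambda\abs{D^2\ue}^2$ and only afterwards comparing $\abs{D(\Ve_s(D\ue))}^2$ with $\mu^{p-2+s}\abs{D^2\ue}^2$.

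One step deserves an explicit justification. After factoring out the common weight $\mu^{p-2+s}$, your constraint $A\geq 2B-C$ reads $\abs{D^2\ue}^2\geq 2b-c$ with $b=\abs{D^2\ue D\ue}^2/\mu^2$ and $c=(\il\ue)^2/\mu^4$, whereas the fundamental inequality \eqref{eq:simple-fund-ineq} divided by $\abs{D\ue}^4$ gives $\abs{D^2\ue}^2\geq 2b'-c'$ with $\abs{D\ue}$ in the denominators; since $c'\geq c$, the naive substitution goes the wrong way, so the $\mu$-weighted constraint is not literally ``the fundamental inequality after carrying the weight.'' It is nevertheless true: writing $q=\abs{D\ue}^2/\mu^2\in[0,1]$ one has $b=qb'$ and $c=q^2c'$, and since $c'\leq b'$ (Cauchy--Schwarz) the map $q\mapsto 2qb'-q^2c'$ is nondecreasing on $[0,1]$, whence $2b-c\leq 2b'-c'\leq\abs{D^2\ue}^2$ (the case $D\ue=0$ being trivial since then $b=c=0$). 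This is exactly the $\eps$-bookkeeping that the paper performs through the identity $1=\abs{D\ue}^2/\mu^2+\eps/\mu^2$; with that line added, your argument is complete.
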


Note that here the range of $s$ and the coefficient $C$ do not depend on $n$. For the parabolic case, the range of $s$ must satisfy the constraints of both the elliptic and parabolic terms. We get the elliptic restriction $s>-1-\frac{p-1}{n-1}$ by \cite{sarsa20} and $s>-1$ rising from the parabolic terms (see Remark \ref{counterexample}). By combining them we get the restriction $s>\max\{-1-\frac{p-1}{n-1},-1\}=-1$, thus the inequality \eqref{eq:simple-fund-ineq} is sufficient and further the coefficient $C=C(p,s)$ is independent of $n$.

\begin{remark}
In particular, we may set  $s=0$, and  $s=p-2$ for any $1<p<\infty$ reproving Lindqvist's result in \cite{lindqvist08}  \,for $\abs{Du}^{\frac{p-2}{2}}Du$ and $\abs{Du}^{p-2}Du$. If $1<p<3$, we may set $s=2-p$ to reprove the second order Sobolev regularity obtained in \cite{dongpzz20}.
\end{remark}

\begin{remark}[Counterexample]\label{counterexample}
The counterexample from \cite{dongpzz20} turns out to work also in our case, and shows that  the range $s>-1$ in Theorem \ref{thm:MainTheorem} is sharp. By a direct calculation, the function
$$u(x_1,x_2)=\big(\tfrac{p}{p-1}\big)^{p-1}t+|x_1|^{1+\frac1{p-1}}$$
is a solution to \eqref{eq:p-parab} in $\mathbb{R}^2\times(0,\infty)$, and
$$|D(|Du|^{\frac{p-2+s}2}Du)|=C(p,s)|x_1|^{\frac{-p+2+s}{2(p-1)}}\in L^2_{{\rm loc}}(\mathbb{R}^2\times(0,\infty))$$
if and only if $s>-1$.

Indeed, we have
$$u_{x_1}=\tfrac{p}{p-1}|x_1|^{\frac1{p-1}-1}x_1,\quad u_{x_2}=0, $$
and
$$u_{x_1x_1}=\tfrac{p}{(p-1)^2}|x_1|^{\frac1{p-1}-1},\quad  u_{x_1x_2}=u_{x_2x_1}=u_{x_2x_2}=0. $$
Then
\begin{align*}
|D(|Du|^{\frac{p-2+s}2}Du)|
&=\Big(\frac{p}{p-1}\Big)^{\frac{p-2+s}2}|x_1|^{\frac{p-2+s}{2(p-1)}}\Big|\frac {p}{(p-1)^2}|x_1|^{\frac{2-p}{p-1}}+\frac{p-2+s}{2}\frac{p}{(p-1)^2}|x_1|^{\frac{2-p}{p-1}}\Big|\\
&=\Big(\frac{p}{p-1}\Big)^{\frac{p-2+s}2}\frac{p(p+s)}{2(p-1)^2}|x_1|^{\frac{2-p+s}{2(p-1)}}\\
&=C(p,s)|x_1|^{\frac{-p+2+s}{2(p-1)}}.
\end{align*}
\end{remark}

Once we have proven the main result, Theorem \ref{thm:MainTheorem}, the existence and integrability of the time derivative easily follows as pointed out by Lindqvist \cite{lindqvist08} and Dong, Peng, Zhang and Zhou \cite{dongpzz20}. We give the short proof for the convenience of the reader.

\begin{corollary}[Time derivative]
Let $u\colon\Om _T\to\R$ be a weak solution to the parabolic $p$-Laplace equation \eqref{eq:p-parab}. Then the time derivative $u_t$ exists as a function and
$u_t\in L^2_\loc(\Om _T)$.
\end{corollary}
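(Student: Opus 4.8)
The plan is to specialize Theorem \ref{thm:MainTheorem} to the single exponent that reproduces the $p$-Laplacian vector field, namely $s=p-2$. For this choice we have $\frac{p-2+s}{2}=p-2$, so that $V_{p-2}(Du)=|Du|^{p-2}Du$. Since $1<p<\infty$ forces $s=p-2>-1$, the hypothesis of the theorem is met, and we conclude that $D\big(|Du|^{p-2}Du\big)$ exists and belongs to $L^2_\loc(\Om_T)$. In particular the weak spatial divergence
$$\divt\big(|Du|^{p-2}Du\big)=\sum_{i=1}^n \partial_{x_i}\big(|Du|^{p-2}u_{x_i}\big)$$
is a well-defined function in $L^2_\loc(\Om_T)$, being a sum of components of a Jacobian that already lies in $L^2_\loc$.

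Next I would feed this information into the weak formulation of the equation. Fix a test function $\phi\in C_0^\infty(\Om_T)$. Because $|Du|^{p-2}Du$ possesses a weak spatial divergence in $L^2_\loc$ and $\phi$ has compact support, integration by parts in the space variables is justified and produces no boundary terms:
$$\int_0^T\!\!\int_\Om |Du|^{p-2}\la Du,D\phi\ra \ud x\ud t = -\int_0^T\!\!\int_\Om \divt\big(|Du|^{p-2}Du\big)\,\phi \ud x\ud t.$$
Substituting the defining identity of a weak solution, $\int |Du|^{p-2}\la Du,D\phi\ra = \int u\,\phi_t$, then gives
$$\int_0^T\!\!\int_\Om u\,\phi_t \ud x\ud t = -\int_0^T\!\!\int_\Om \divt\big(|Du|^{p-2}Du\big)\,\phi \ud x\ud t$$
for every $\phi\in C_0^\infty(\Om_T)$.

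Finally I would read this last identity as the definition of the weak time derivative. It states precisely that $u_t$ exists in the distributional sense and that $u_t=\divt\big(|Du|^{p-2}Du\big)$ as functions. Since the right-hand side was shown to lie in $L^2_\loc(\Om_T)$, so does $u_t$, which is the desired conclusion; the argument also recovers the expected pointwise form of the equation $u_t=\Delta_p u$ almost everywhere.

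As for obstacles, once Theorem \ref{thm:MainTheorem} is available there is no substantial analytic difficulty remaining, and the only point needing a word of care is the legitimacy of the spatial integration by parts. This is sound because the statement that $D(V_{p-2}(Du))$ exists and belongs to $L^2_\loc$ means exactly that the locally integrable field $|Du|^{p-2}Du\in L^{p/(p-1)}_\loc(\Om_T)$ has weak spatial derivatives in $L^2_\loc$; pairing such a field against a smooth, compactly supported $\phi$ therefore obeys the integration-by-parts formula by the very definition of the weak derivative, with no boundary contribution.
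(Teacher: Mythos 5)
Your proposal is correct and follows essentially the same route as the paper: choose $s=p-2>-1$, apply Theorem \ref{thm:MainTheorem} to get $D(\abs{Du}^{p-2}Du)\in L^2_\loc$, and then read off $u_t=\divt(\abs{Du}^{p-2}Du)$ from the weak formulation. Your version merely spells out the spatial integration by parts in more detail than the paper does.
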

\begin{proof}
Let $s=p-2>-1$, then $p+s=2(p-1)>0$ and $s+2=p>1$. By Theorem~\ref{thm:MainTheorem}, for all $ Q_r\subset Q_{2r}\Subset \Om _T$,
we have
\begin{align}\label{eqm2}
&\int_{Q_r} \big|D(\abs{Du}^{ p-2 }Du)\big|^{2} dxdt\nonumber \\
&\le \frac {C(p )}{r^2}\Big(\int_{Q_{2r}} \abs{Du}^{2(p-1)}   dxdt+  \int_{Q_{2r}} \abs{Du}^{p} dxdt\Big),
\end{align}
which implies
$$  D(\abs{Du}^{ p-2 }Du) \in L^2_{\rm loc}(\Om _T).$$
By the weak formulation
$$\int_{Q_r} u \phi_t dxdt=-\int_{Q_r} \divt(\abs{Du}^{p-2}Du) \phi dxdt \quad \text{for all } \phi\in C^\infty_0(Q_r),$$
we get that $u_t$ exists, and $u_t\in L^2_{\rm loc}(\Om _T)$.
\end{proof}

\section{Idea of the proof} \label{sec:formal}
In this section, for the convenience of the reader, we present the formal idea of the proof without excess details. In this setting, we assume that $u\in C^{\infty}(\Om _T)$ and $Du\neq0$. The detailed proof is presented in Section~\ref{sec:details}.

Differentiating with respect to $x_k$ in \eqref{eq:p-parab}, we get
\begin{align}
\label{eq:deriv-p-parab}
(u_{x_k})_t=\divt(\abs{Du}^{p-2}A\,Du_{x_k})
\end{align}
where
\begin{align*}
A=I+(p-2)\frac{Du\otimes Du}{\abs{Du}^{2}}.
\end{align*}
Here $I$ denotes the $n\times n$ identity matrix and $D u \otimes D u$ stands for the tensor product  of two vectors in $\Rn$, resulting in a matrix in $\R^{n\times n}$ with the entries $u_{x_i}u_{x_j}$.

We first study the term on the left hand side of (\ref{eq:deriv-p-parab}), and
choose a test function
\begin{align*}
\vp=\abs{Du}^{s}u_{x_k}\phi^2
\end{align*}
with $s>-1$ and $\phi \in C_0^\infty(\Om _T)$. Summing over $k$, we get
\begin{align*}
\sum_{k=1}^n\int_{\Om _T}  (u_{x_k})_t \vp dxdt&=\sum_{k=1}^n\int_{\Om _T}  (u_{x_k})_t \abs{Du}^{s}u_{x_k}\phi^2 dxdt\\
&=\sum_{k=1}^n\int_{\Om _T}  \half (u^2_{x_k})_t \abs{Du}^{s}\phi^2 dxdt\\
&=\int_{\Om _T}  \half (\abs{Du}^2)_t \abs{Du}^{s}\phi^2 dxdt\\
&=\int_{\Om _T}  \frac{1}{s+2}(\abs{Du}^{s+2})_t \phi^2 dxdt\\
&=-\int_{\Om _T}  \frac{1}{s+2}\abs{Du}^{s+2}(\phi^2)_t dxdt.
\end{align*}
Recalling (\ref{eq:deriv-p-parab}) we have
\begin{align*}
0&= \sum_{i=1}^n\int_{\Om _T}  \divt(\abs{Du}^{p-2}A\,Du_{x_i})(\abs{Du}^{s}u_{x_i}\phi^2) dxdt +\frac{2}{s+2}  \int_{\Om _T}\abs{Du}^{s+2}\phi\phi_t  \,dxdt.
\end{align*}
Now the first integral on the right hand side is of the same form as in the elliptic case, and thus the proof of \cite[Lemma 3.3]{sarsa20} gives that for any $\eta>0$,
\begin{align}
\label{eq:main-ineq}
\int_{\Om _T}& \abs{Du}^{p-2+s}\Big\{  \abs{D^{2}u}^2+(p-2+s-\eta)\abs{D\abs{Du}}^{2}+(s(p-2)-\eta)(\Delta_{\infty}^N u)^2  \Big\}\phi^{2} \ud xdt\nonumber\\
&\le \frac{C(p)}{\eta }\int_{\Om _T}\abs{Du}^{p+s} \abs{D\phi}^{2} \ud xdt+\frac{2}{s+2} \int_{\Om _T}\abs{Du}^{s+2}|\phi||\phi_t| \ud xdt,
\end{align}
where $\Delta_{\infty}^N u:=\abs{Du}^{-2}\sum_{i,j=1}^n u_{x_i x_j} u_{x_i}u_{x_j}$ stands for the normalized or game theoretic infinity Laplacian.
Observe that on the right hand side, we have bounded terms only.
As a corollary, similarly as in \cite[Corollary 3.4]{sarsa20}, we get
\begin{align}
\label{eq:cor-main-ineq}
\int_{\Om _T}& \abs{Du}^{p-2+s}   \abs{D^{2}u}^2\phi^{2} \ud xdt \nonumber \\
& \le C(p,s,\eta)\int_{\Om _T} \abs{Du}^{p-2+s} \abs{D\abs{Du}}^{2}\phi^{2} \ud xdt\nonumber \\
&+\frac{C(p)}{\eta }\int_{\Om _T}\abs{Du}^{p+s} \abs{D\phi}^{2} \ud xdt+\frac{2}{s+2} \int_{\Om _T}\abs{Du}^{s+2}|\phi||\phi_t| \ud xdt.
\end{align}
Next we estimate the first term on the right hand side in \eqref{eq:cor-main-ineq}.
Using the inequality of \cite[Corollary 2.2]{sarsa20}:
$$|Du|^4|D^2u|^2\geq2|Du|^2|D^2uDu|^2+\frac{(|Du|^2\Delta u-\Delta_\infty u)^2}{n-1}-(\Delta_\infty u)^2,$$
dividing both sides by $|Du|^4$, we have
\begin{align}\label{eq:smo-tri-ineq}
 |D^2u|^2
 &\geq2 \abs{D\abs{Du}}^{2}+\frac{(\Delta u-\Delta_\infty^N u)^2}{n-1}-(\Delta_\infty^N u)^2\nonumber\\
 &\geq2 \abs{D\abs{Du}}^{2} -(\Delta_\infty^N u)^2.
 \end{align}
On the last line we used $(\Delta u-\Delta_\infty^N u)^2\geq0$. Now we use the previous inequality in \eqref{eq:main-ineq} for the term containing $\abs{D^{2}u}^2$,
set $\eta=\min\{ \frac14(p+s),\frac16(p-1)(s+1)\}$,
 and obtain
 \begin{align}\label{eq:smo-esti}
& \abs{D^{2}u}^2+(p-2+s-\eta)\abs{D\abs{Du}}^{2}+(s(p-2)-\eta)(\Delta_{\infty}^N u)^2\nonumber\\
&\geq2 \abs{D\abs{Du}}^{2} -(\Delta_{\infty}^{N}u)^2+(p-2+s-\eta )\abs{D\abs{Du}}^{2}+(s(p-2)-\eta)(\Delta_{\infty}^N u)^2\nonumber\\
&= (p+s-\eta )\abs{D\abs{Du}}^{2}+(s(p-2)-1-\eta)(\Delta_{\infty}^N u)^2\nonumber\\
&= \eta  \abs{D\abs{Du}}^{2}+(p+s-2\eta )\abs{D\abs{Du}}^{2}+(s(p-2)-1-\eta)(\Delta_{\infty}^N u)^2\nonumber\\
&\geq \eta  \abs{D\abs{Du}}^{2}+(p+s+s(p-2)-1-3\eta)(\Delta_{\infty}^N u)^2\nonumber\\
&=\eta  \abs{D\abs{Du}}^{2}+( (p-1)(s+1)-3\eta)(\Delta_{\infty}^N u)^2\nonumber\\
&\geq\eta\abs{D\abs{Du}}^{2}=C(p,s)\abs{D\abs{Du}}^{2},
\end{align}
whenever $s>-1$.  We also used
\begin{align}\label{eq:trivialineq}
\abs{D\abs{Du}}^2=\frac{\abs{D^{2}uDu}^2}{\abs{Du}^2}\ge\Bigg(\frac{\la D^2 uDu,Du\ra}{\abs{Du}^2}\Bigg)^2= (\Delta_{\infty}^{N}u)^2.
\end{align}
Thus
\begin{align*}
\int_{\Om _T}& \abs{Du}^{p-2+s}
 \abs{D\abs{Du}}^{2}\phi^{2} \ud xdt\\
&\le  C(p,s)\Big( \int_{\Om _T}\abs{Du}^{p+s} \abs{D\phi}^{2} \ud xdt+ \int_{\Om _T}\abs{Du}^{s+2}|\phi||\phi_t| \ud xdt\Big).
\end{align*}
 Combining this with $(\ref{eq:cor-main-ineq})$, we get
\begin{align*}
\int_{\Om _T}\abs{Du}^{p-2+s}   \abs{D^{2}u}^2\phi^{2} \ud xdt
&\le C(p,s)\Big(\int_{\Om _T}\abs{Du}^{p+s} \abs{D\phi}^{2} \ud xdt+  \int_{\Om _T}\abs{Du}^{s+2}|\phi||\phi_t|\ud xdt\Big).
\end{align*}
By a direct calculation
\begin{align}\label{eq1}
\int_{\Om _T}\big| D(\abs{Du}^{\frac{p-2+s}2}Du)\big|^{2}\phi^2\ud xdt
\leq C(p,s)\int_{\Om _T}\abs{Du}^{p-2+s}|D^2u|^2\phi^2\ud xdt,
\end{align}
and combining this with the previous estimate, we finally get
\begin{align}\label{eqm1}
&\int_{\Om _T}\big|D(\abs{Du}^{\frac{p-2+s}2}Du)\big|^{2}\phi^2\ud xdt \nonumber\\
&\le C(p,s)\Big(\int_{\Om _T}\abs{Du}^{p+s} \abs{D\phi}^{2} \ud xdt+  \int_{\Om _T}\abs{Du}^{s+2}|\phi||\phi_t| \ud xdt\Big).
\end{align}
The estimate in Theorem \ref{thm:MainTheorem} is obtained by choosing $\phi$ as a standard cutoff function.

\section{Detailed proof}
\label{sec:details}
In this section, we present a detailed proof of  Theorem \ref{thm:MainTheorem} by regularizing the equation \eqref{eq:p-parab}. Solutions to the regularized equation will be smooth, and thus the differentiation of this equation is justified. Since the obtained estimates will be uniform with respect to the regularization, we will be able to pass to the original equation at the end.

To start with the above plan, let $u\colon\Om_T\to\R$ be a $p$-parabolic function. Fix a smooth subdomain $U\Subset\Om$ and $0<t_1<t_2<\infty$ such that $U_{t_1,t_2}\Subset \Om_T$. Let $\epsilon>0$ be small and $\ue\colon U_{t_1,t_2}\to\R$ be a weak solution to
\begin{equation} \label{eq:RegularizedProblem}
    \begin{cases}
    \begin{aligned}
    \ue_t-\diverg\big(\mu^{p-2}D\ue\big)=0 &\quad\text{in } U_{t_1,t_2}; \\
    \ue=u &\quad\text{on }\partial_p U_{t_1,t_2},
    \end{aligned}
    \end{cases}
\end{equation}
where
$$ \mu:=\sqrt{|D\ue|^2+\epsilon} $$
and the \emph{parabolic boundary} is defined as
\[
\partial_p U_{t_1,t_2}=(\overline U\times \{t_1\})\cup(
\partial U\times (t_1,t_2]). \]
According to standard parabolic theory, we get $\ue \in C^\infty( U_{t_1,t_2})\cap C( \overline{U} _{t_1,t_2})$, see \cite{dibenedettof85,wiegner86}.

\begin{lemma} \label{lem:EstimateforRegularization}
Let $\ue\colon  U_{t_1,t_2}\to\R$ be a weak solution to \eqref{eq:RegularizedProblem}. If $s>-1$, then for any $\phi\in C^{\infty}_0( U_{t_1,t_2})$, we have
\begin{align*}
    &\int_{ U_{t_1,t_2}} \mu^{p-2+s}|D^2\ue|^2\phi^2dxdt \\
    &\leq C\Big(
    \int_{ U_{t_1,t_2}} \mu^{p-2+s}|D\ue|^2|D\phi|^2dxdt
    +\int_{ U_{t_1,t_2}} \mu^{s+2}|\phi||\phi_t|dxdt
    \Big)
\end{align*}
where $C=C(p,s)>0$ is independent of $\epsilon$.
\end{lemma}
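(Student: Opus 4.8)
The plan is to make the formal computation of Section~\ref{sec:formal} rigorous for the regularized, smooth solution $\ue$. Since $\ue\in C^\infty(U_{t_1,t_2})$ solves the nondegenerate equation \eqref{eq:RegularizedProblem} with $\mu=\sqrt{|D\ue|^2+\eps}>0$, there are no issues with differentiating the equation or with dividing by powers of $\mu$. First I would differentiate \eqref{eq:RegularizedProblem} with respect to $x_k$, obtaining
\begin{align*}
(\ue_{x_k})_t=\divt\big(\mu^{p-2}A^\eps\,D\ue_{x_k}\big),
\qquad
A^\eps=I+(p-2)\frac{D\ue\otimes D\ue}{\mu^{2}},
\end{align*}
where the coefficient matrix now carries $\mu^2$ rather than $|D\ue|^2$ in the denominator; this is the natural regularized analogue of the matrix $A$ from Section~\ref{sec:formal}. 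I would then test with $\vp=\mu^{s}\ue_{x_k}\phi^2$, sum over $k$, and integrate by parts in time exactly as in the formal derivation. The time term collapses to $-\frac{1}{s+2}\int \mu^{s+2}(\phi^2)_t\,dxdt$ (using $(\mu^2)_t=(|D\ue|^2)_t$), which is bounded by $\frac{2}{s+2}\int\mu^{s+2}|\phi||\phi_t|\,dxdt$; this is exactly the second term on the right-hand side of the claimed estimate.

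Next I would treat the spatial divergence term. Because the equation is nondegenerate and everything is smooth, the integration by parts producing the second-order quantities is fully justified, and the term has the same algebraic structure as in the elliptic computation of \cite{sarsa20}. I would therefore invoke the elliptic estimates \cite[Lemma 3.3, Corollary 3.4]{sarsa20} in their regularized form, replacing $|Du|$ by $\mu$ throughout, to obtain the analogues of \eqref{eq:main-ineq} and \eqref{eq:cor-main-ineq}: for any $\eta>0$,
\begin{align*}
\int_{U_{t_1,t_2}}&\mu^{p-2+s}\Big\{|D^2\ue|^2+(p-2+s-\eta)|D\,\mu|^2+(s(p-2)-\eta)(\Delta_\infty^N\ue)^2\Big\}\phi^2\,dxdt\\
&\le\frac{C(p)}{\eta}\int_{U_{t_1,t_2}}\mu^{p-2+s}|D\ue|^2|D\phi|^2\,dxdt+\frac{2}{s+2}\int_{U_{t_1,t_2}}\mu^{s+2}|\phi||\phi_t|\,dxdt,
\end{align*}
where $|D\,\mu|$ plays the role of $|D|Du||$ and $\Delta_\infty^N\ue=\mu^{-2}\sum_{i,j}\ue_{x_ix_j}\ue_{x_i}\ue_{x_j}$. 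The key structural point is that the gradient-squared factor $|D\ue|^2\le\mu^2$ appears in the first right-hand term, matching the form stated in the lemma.

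I would then feed in the simplified fundamental inequality. Dividing \eqref{eq:simple-fund-ineq} (applied to $\ue$) by $\mu^4$ gives the regularized version of \eqref{eq:smo-tri-ineq}, namely $|D^2\ue|^2\ge 2|D\,\mu|^2-(\Delta_\infty^N\ue)^2$ after noting $|D\,\mu|^2=|D^2\ue\,D\ue|^2/\mu^2$; one must be slightly careful that the identity $|D\,\mu|=|D^2\ue\,D\ue|/\mu$ holds in the nondegenerate setting, which it does since $\mu D\,\mu=D^2\ue\,D\ue$. Choosing $\eta=\min\{\tfrac14(p+s),\tfrac16(p-1)(s+1)\}$ and running the chain of algebraic estimates \eqref{eq:smo-esti}, together with the elementary inequality \eqref{eq:trivialineq} in the form $|D\,\mu|^2\ge(\Delta_\infty^N\ue)^2$, produces the coercive bound $\{\cdots\}\ge C(p,s)|D\,\mu|^2$ precisely when $s>-1$. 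Combining this with the corollary-type estimate and absorbing the $|D^2\ue|^2$ term yields the lemma, with $C=C(p,s)$ manifestly independent of $\eps$ because the coercivity constant and all intermediate constants depend only on $p$, $s$, and $\eta=\eta(p,s)$. The main obstacle I anticipate is purely bookkeeping: verifying that the elliptic lemmas of \cite{sarsa20} transfer verbatim with $\mu$ in place of $|Du|$ and that no constant secretly depends on $\eps$; the inequalities themselves are pointwise algebraic facts about the smooth function $\ue$, so they carry over, but one should confirm that the coefficient matrix $A^\eps$ satisfies the same ellipticity bounds (eigenvalues between $\min\{1,p-1\}$ and $\max\{1,p-1\}$) uniformly in $\eps$, which it does.
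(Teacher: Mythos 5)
Your overall strategy---differentiate the regularized equation, test with $\mu^s\ue_{x_k}\phi^2$, and rerun the Section~\ref{sec:formal} computation with $\mu$ in place of $|Du|$---is exactly the paper's strategy, and most of your steps (the time term, the divergence-structure identity, the uniform ellipticity of $A^\eps$, the final absorption) go through as you describe. The genuine gap is in the step you present as a one-line substitution: the ``regularized fundamental inequality''. Dividing \eqref{eq:simple-fund-ineq} by $\mu^4$ does \emph{not} give $|D^2\ue|^2\ge 2|D\mu|^2-(\mu^{-2}\il\ue)^2$. Writing $\theta:=|D\ue|^2/\mu^2\le 1$ and using $|D\mu|=|D^2\ue\,D\ue|/\mu$, the division actually yields
\begin{align*}
\theta^2\,|D^2\ue|^2\;\ge\;2\,\theta\,|D\mu|^2-\big(\mu^{-2}\il\ue\big)^2,
\end{align*}
so every favorable term is damped by a power of $\theta$, and these factors cannot simply be dropped because they multiply the \emph{positive} terms. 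This matters precisely in the regime $|D\ue|^2\ll\eps$, which is the whole point of the regularization: with only the damped inequality, the coefficient of $|D\mu|^2$ in your chain \eqref{eq:smo-esti} becomes $2\theta+p-2+s-\eta$, which is negative for small $\theta$ whenever $p+s<2$, and the coercivity argument does not close.

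The inequality you want is in fact true---one checks that $2\theta a-\theta^2 b\le 2a-b$ whenever $a\ge b\ge 0$ and $0\le\theta\le1$, with $a=\abs{D\abs{D\ue}}^2$ and $b=(\il\ue)^2/|D\ue|^4$, so the undamped bound follows from the genuine fundamental inequality together with $a\ge b$---but this needs an argument beyond ``divide by $\mu^4$'', and, more importantly, the subsequent coefficient-chasing must be redone with the $\eps$-dependent contributions tracked explicitly so that the final constant is uniform in $\eps$. This is precisely what the paper's proof does: it splits $1=|D\ue|^2/\mu^2+\eps/\mu^2$, expands the quantities $\tau$ and $\sigma-\tau$ in powers of $\eps/\mu^2$ (displays \eqref{eq:tau} and \eqref{eq:sigmaminustau}), and checks that all coefficients can be made nonnegative exactly when $s>-1$. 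So the ``bookkeeping'' you defer is not a verbatim transfer of the elliptic lemmas with $|Du|$ replaced by $\mu$; it is the actual content of the regularized proof, and as written your derivation of the key pointwise lower bound is incorrect even though its conclusion is salvageable.
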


To prove Lemma \ref{lem:EstimateforRegularization}, we use the inequality \eqref{eq:simple-fund-ineq}. In Section~\ref{sec:formal}, under the assumption $  Du \neq0$, we can directly divide both sides of \eqref{eq:simple-fund-ineq} by $|Du|^4$ to get the inequality \eqref{eq:smo-tri-ineq}, which gives the lower bound of $\abs{D^2u}^2$. In order to get an inequality similar to \eqref{eq:smo-tri-ineq}, we also need to consider the case when $Du=0$.
Thus we reformulate \eqref{eq:simple-fund-ineq} here in a way that allows us to apply it in this context.

For the reformulation, we introduce some notations.
Let $v\colon U_{t_1,t_2}\to\R$ be a smooth function. In particular, $|Dv|$ is locally Lipschitz continuous (by triangle inequality) and thus, by Rademacher's theorem, differentiable almost everywhere on each time slice, hence also in $U_{t_1,t_2}$.

Note that if $(x_0,t_0)\in U_{t_1,t_2}$ is a space-time point where $|Dv|$ is differentiable and $Dv(x_0,t_0)=0$, then $D|Dv|(x_0,t_0)=0$. Indeed, if we had $D|Dv|(x_0,t_0)\neq 0$, then we could find a point $\xi\in U\times\{t_0\}$ (close to $(x_0,t_0)$) such that $|Dv|(\xi)<0$, which is obviously impossible.
On the other hand, if $Dv(x_0,t_0)\neq 0$ for some $(x_0,t_0)\in U_{t_1,t_2}$, then $|Dv|$ is differentiable at $(x_0,t_0)$ and
$$ D|Dv|(x_0,t_0)=\frac{D^2v(x_0,t_0)Dv(x_0,t_0)}{|Dv(x_0,t_0)|}. $$

For each point in $U_{t_1,t_2}$ where $Dv\neq0$, we fix an orthonormal basis of $\Rn$, $\{e_1,\ldots,e_n\}$,
such that $e_n=\frac{Dv}{|Dv|}$. Hence we have, for those points where $Dv\neq0$,
$$ \frac{D^2vDv}{|Dv|}
=\la e_1,D|Dv|\ra e_1
+\ldots
+\la e_{n-1},D|Dv|\ra e_{n-1}
+\la \frac{Dv}{|Dv|},D|Dv|\ra\frac{Dv}{|Dv|}. $$
For those points where $|Dv|$ is differentiable, let us define the part of $D|Dv|$ which is tangential to the spatial level sets of $v$ as
\begin{align*}
    D_T|Dv|:=
    \begin{cases}
    \la e_1,D|Dv|\ra e_1
    +\ldots
    +\la e_{n-1},D|Dv|\ra e_{n-1} &\quad\text{if }Dv\neq 0, \\
    0 &\quad\text{if }Dv= 0,
    \end{cases}
\end{align*}
and its orthogonal counterpart, the normalized infinity Laplacian, as
\begin{align*}
    \ilN v:=
    \begin{cases}
    \la \frac{Dv}{|Dv|},D|Dv|\ra=\frac{\il v}{|Dv|^2} &\quad\text{if }Dv\neq 0, \\
    0 &\quad\text{if }Dv= 0.
    \end{cases}
\end{align*}
We employ these notations to write
\begin{equation} \label{eq:OrthogonalRepresentation}
    |D|Dv||^2=|D_T|Dv||^2+(\ilN v)^2 \quad\text{a.e. in }U_{t_1,t_2}.
\end{equation}

In Section \ref{sec:formal}, without dividing $|D|Dv||^2$ into two parts, we use the inequality \eqref{eq:smo-tri-ineq} and \eqref{eq:trivialineq} to get the estimate \eqref{eq:smo-esti}. When using \eqref{eq:trivialineq}, we need to be careful and check that if the coefficient of $|D|Dv||^2$ is nonnegative. For the regularization, the coefficients of each terms become more complicated, thus by using the equality \eqref{eq:OrthogonalRepresentation}, we can consider the coefficients together in the last step of the estimate. Now we can restate \eqref{eq:simple-fund-ineq}.

\begin{lemma} \label{lem:TrivialInequality}
Let $v\colon U_{t_1,t_2}\to\R$ be a smooth function. Then
\begin{align}\label{eq:tri-ineq}
    |D^2v |^2\geq 2|D_T|Dv| |^2+(\ilN v )^2\quad\text{  a.e. in } U_{t_1,t_2}.
\end{align}
\end{lemma}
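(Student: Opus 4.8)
The plan is to reduce the claim directly to the fundamental inequality \eqref{eq:simple-fund-ineq}, treating separately the set where the spatial gradient vanishes and the set where it does not. Since $v$ is smooth, $|Dv|$ is locally Lipschitz, so by Rademacher's theorem it is differentiable at almost every point of $U_{t_1,t_2}$; it therefore suffices to verify \eqref{eq:tri-ineq} at each differentiability point, and almost every such point falls into one of two cases: either $Dv\neq0$ (where $|Dv|$ is automatically differentiable) or $Dv=0$ with $|Dv|$ differentiable.

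First I would dispose of the points where $Dv=0$. At such a point the discussion preceding the statement gives $D|Dv|=0$, and the definitions of $D_T|Dv|$ and $\Delta_\infty^N v$ then force both of them to vanish. Hence the right-hand side of \eqref{eq:tri-ineq} is zero while the left-hand side is nonnegative, so the inequality holds trivially there.

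The substantive case is $Dv\neq0$. Here I would apply \eqref{eq:simple-fund-ineq} pointwise and divide through by $|Dv|^4>0$, obtaining
\begin{equation*}
    |D^2v|^2\geq 2\frac{|D^2v\,Dv|^2}{|Dv|^2}-\frac{(\Delta_\infty v)^2}{|Dv|^4}.
\end{equation*}
Then I would rewrite the two right-hand terms in normalized form: by the formula $D|Dv|=\frac{D^2v\,Dv}{|Dv|}$ recorded above we have $\frac{|D^2v\,Dv|^2}{|Dv|^2}=|D|Dv||^2$, while by definition $\frac{(\Delta_\infty v)^2}{|Dv|^4}=(\Delta_\infty^N v)^2$. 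Substituting the orthogonal decomposition \eqref{eq:OrthogonalRepresentation}, namely $|D|Dv||^2=|D_T|Dv||^2+(\Delta_\infty^N v)^2$, the right-hand side collapses to $2|D_T|Dv||^2+2(\Delta_\infty^N v)^2-(\Delta_\infty^N v)^2=2|D_T|Dv||^2+(\Delta_\infty^N v)^2$, which is exactly the claimed bound.

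I do not expect a genuine obstacle: the inequality \eqref{eq:simple-fund-ineq} already carries all of the geometric content, and the only point requiring care is the bookkeeping at the degenerate set $\{Dv=0\}$, which is precisely why the decomposition \eqref{eq:OrthogonalRepresentation} and the almost-everywhere differentiability of $|Dv|$ are invoked. In particular no appeal to the full fundamental inequality with the $\frac{(|Dv|^2\Delta v-\Delta_\infty v)^2}{n-1}$ term is needed, which is what keeps the resulting constant independent of $n$, in line with the discussion following Theorem \ref{thm:MainTheorem}.
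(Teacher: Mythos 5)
Your proof is correct and follows essentially the same route as the paper: Rademacher's theorem for the a.e.\ differentiability of $|Dv|$, the trivial case $Dv=0$, and for $Dv\neq0$ the fundamental inequality divided by $|Dv|^4$ combined with the orthogonal decomposition \eqref{eq:OrthogonalRepresentation}. The only (cosmetic) difference is that the paper invokes the full inequality of \cite[Corollary 2.2]{sarsa20} and then discards the term $\frac{(|Dv|^2\Delta v-\Delta_\infty v)^2}{n-1}$, which forces it to treat $n=1$ separately (where \eqref{eq:tri-ineq} is an identity), whereas you quote the simplified form \eqref{eq:simple-fund-ineq} directly, which is valid for every $n\geq 1$.
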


\begin{proof}
Recall that $|Dv|$ is differentiable a.e. in $U_{t_1,t_2}$. From now on, consider such points of $U_{t_1,t_2}$ where $|Dv|$ is differentiable.
When $n=1$, by the definition of $D_T|D v|$, we have $D_T|D v|=0$ and \eqref{eq:tri-ineq} is obviously an identity. Then we consider the case $n\geq2$.
If $Dv=0$, then \eqref{eq:tri-ineq} holds trivially by what we defined above.
If $Dv\neq 0$, then by \cite[Corollary 2.2]{sarsa20}, we have
$$|D v|^4|D^2v|^2\geq2|Dv|^2|D^2v Dv|^2+\frac{(|Dv|^2\Delta v-\Delta_\infty v)^2}{n-1}-(\Delta_\infty v)^2.$$
Dividing both sides by $\abs{Dv}^4$, using the definitions of $D_T|D v|$ and $\ilN v$, we get the desired inequality by following:
\begin{align*}
\abs{D^2v}^2&\geq 2\abs{D\abs{Dv}}^2+\frac{(\Delta v-\ilN v)^2}{n-1}-(\ilN v)^2\\
&\geq 2\abs{D\abs{Dv}}^2 -(\ilN v)^2\\
&=2|D_T|D v||^2+(\ilN v)^2.\qedhere
\end{align*}
\end{proof}

\begin{proof}[Proof of Lemma \ref{lem:EstimateforRegularization}]
The spatial partial derivatives $\ue_{x_k}$, $k=1,\ldots,n$, solve
\begin{equation} \label{eq:RegularizedLinearization}
    (\ue_{x_k})_t-\diverg\big(\mu^{p-2}AD\ue_{x_k}\big)=0
\end{equation}
where
$$ A=I+(p-2)\frac{D\ue\otimes D\ue}{\mu^2}. $$
Note that
\begin{equation} \label{eq:EllipticityofA}
\min \{1, p-1\}I \leq A \leq \max \{1,p-1\} I
\end{equation}
uniformly in $ U_{t_1,t_2}$ and for $\epsilon$.

We multiply the equation \eqref{eq:RegularizedLinearization} with $\mu^s\ue_{x_k}$, where $s>-1$, and obtain
\begin{equation} \label{eq:TestedLinearization}
    \mu^s\ue_{x_k}(\ue_{x_k})_t
    -\mu^s\ue_{x_k}\diverg\big(\mu^{p-2}A\,D\ue_{x_k}\big)=0.
\end{equation}
For the first item in the above display we note that
\begin{align} \label{eq:TimeDerivativeTrick}
    \ue_{x_k}(\ue_{x_k})_t
    =
    \frac{1}{2}\big((\ue_{x_k})^2+\tfrac{\epsilon}{n}\big)_t.
\end{align}
Summing \eqref{eq:TestedLinearization} over $k=1,\ldots,n$ and taking \eqref{eq:TimeDerivativeTrick} into account gives that
\begin{align} \label{eq:LinearizationsSummed}
    \frac{1}{s+2}(\mu^{s+2})_t
    -\mu^s\sum_{k=1}^n\ue_{x_k}\diverg\big(\mu^{p-2}A\,D\ue_{x_k}\big) =0.
\end{align}
Observe that
\begin{equation}
\begin{aligned}
    \diverg\big(\mu^{p-2+s}A\,D^2\ue D\ue\big)
    &=
    \sum_{k=1}^n\diverg\big((\mu^s \ue_{x_k})(\mu^{p-2}A\,D\ue_{x_k})\big) \\
    &=
    \mu^s\sum_{k=1}^n \ue_{x_k}\diverg\big(\mu^{p-2}A\,D \ue_{x_k}\big)
    +\mu^{p-2+s}\Big(|D^2\ue|^2  \\
    &\quad
    +(p-2+s)\frac{|D^2\ue D\ue|^2}{\mu^2}
    +s(p-2)\frac{(\il\ue)^2}{\mu^4}\Big).
\end{aligned}
\end{equation}
Above we used
\begin{align*}
\la A\,D \ue_{x_k},D \ue_{x_k}\ra&=\abs{D \ue_{x_k}}^2+(p-2)\frac{\la D \ue,D \ue_{x_k}\ra^2 }{\mu^2}, \\
\la A\, D \ue_{x_k}, D^2 \ue D \ue\ra &=\la D \ue_{x_k} ,D^2 \ue D \ue \ra+(p-2)\frac{\la D \ue,D \ue_{x_k}\ra \Delta_{\infty}\ue}{\mu^2},
\end{align*}
and a straightforward computation.
In other words,
\begin{equation} \label{eq:DivergenceIdentity}
\begin{aligned}
    \mu^s\sum_{k=1}^n \ue_{x_k}\diverg\big(\mu^{p-2}A\,D \ue_{x_k}\big)
    =
    \diverg\big(\mu^{p-2+s}A\,D^2\ue D\ue\big)
    -\mu^{p-2+s}\sigma,
\end{aligned}
\end{equation}
where
\begin{align*}
    \sigma:
    =
    |D^2\ue|^2+(p-2+s)\frac{|D^2\ue D\ue|^2}{\mu^2}+s(p-2)\frac{(\il\ue)^2}{\mu^4}.
\end{align*}
By \eqref{eq:LinearizationsSummed} and \eqref{eq:DivergenceIdentity}, we have
\begin{align} \label{eq:BasicIdentity}
    \mu^{p-2+s}\sigma
    =
    \diverg\big(\mu^{p-2+s}A\,D^2\ue D\ue\big)-\frac{1}{s+2}(\mu^{s+2})_t.
\end{align}
We claim that for $s>-1$, we can find a small number $\lambda=\lambda( p,s)>0$ such that
\begin{equation} \label{eq:LowerBoundforsigma}
    \lambda|D^2\ue|^2\leq \sigma \quad\text{a.e. in } U_{t_1,t_2}.
\end{equation}
Observe that this is not a trivial inequality since not all the coefficients are positive in $\sigma$.

If \eqref{eq:LowerBoundforsigma} holds, then the desired estimate follows easily. Indeed, we plug the estimate \eqref{eq:LowerBoundforsigma}  into the equation \eqref{eq:BasicIdentity} to obtain
\begin{align} \label{eq:DesiredPointwiseEstimate}
    \lambda\mu^{p-2+s}&|D^2\ue|^2
    \leq
    \diverg\big(\mu^{p-2+s}A\,D^2\ue D\ue\big)-\frac{1}{s+2}(\mu^{s+2})_t.
\end{align}
Let $\phi\in C^{\infty}_0( U_{t_1,t_2})$. Multiplying \eqref{eq:DesiredPointwiseEstimate} by $\phi^2$ and then integrating over $U_{t_1,t_2}$ yields
\begin{align*}
    \lambda\int_{ U_{t_1,t_2}}\mu^{p-2+s}&|D^2\ue|^2\phi^2dxdt\\
    &\leq
    \int_{ U_{t_1,t_2}}\Big(\diverg\big(\mu^{p-2+s}A\,D^2\ue D\ue\big)-\frac{1}{s+2}(\mu^{s+2})_t\Big)\phi^2dxdt.
\end{align*}
We employ integration by parts, \eqref{eq:EllipticityofA} and Young's inequality to obtain the upper bound of the right hand side term in above inequality,
\begin{align*}
    &\int_{ U_{t_1,t_2}}\Big(\diverg\big(\mu^{p-2+s}A\,D^2\ue D\ue\big)-\frac{1}{s+2}(\mu^{s+2})_t\Big)\phi^2dxdt \\
    &=
    -\int_{ U_{t_1,t_2}}\mu^{p-2+s} \la A\,D^2\ue D\ue , D\phi^2\ra dxdt +\frac{1}{s+2}\int_{ U_{t_1,t_2}}\mu^{s+2}(\phi^2)_tdxdt \\
    &\leq
    \eta\int_{ U_{t_1,t_2}}\mu^{p-2+s}|D^2\ue|^2\phi^2dxdt
    +\frac{C}{\eta}\int_{ U_{t_1,t_2}}\mu^{p-2+s}|D\ue|^2|D\phi|^2dxdt \\
    &\quad
    +\frac{2}{s+2}\int_{ U_{t_1,t_2}}\mu^{s+2}|\phi||\phi_t|dxdt
\end{align*}
for any $\eta>0$ and some constant $C=C(p)>0$. The desired estimate follows by choosing $\eta=\frac{\lambda}{2}$.

It remains to prove \eqref{eq:LowerBoundforsigma}.
As explained above in this section,
we can write
\begin{align*}
    \sigma
    &=
    |D^2\ue|^2+(p-2+s)\frac{|D\ue|^2}{\mu^2}|D|D\ue||^2+s(p-2)\frac{|D\ue|^4}{\mu^4}(\ilN\ue)^2 \\
    &=
    |D^2\ue|^2+(p-2+s)\frac{|D\ue|^2}{\mu^2}|D_T|D\ue||^2 \\
    &\quad
    +\Big((p-2+s)\frac{|D\ue|^2}{\mu^2}+s(p-2)\frac{|D\ue|^4}{\mu^4}\Big)(\ilN\ue)^2
\end{align*}
almost everywhere in $ U_{t_1,t_2}$. For $\lambda\in(0,1)$, we write
\begin{equation} \label{eq:Divisionofsigma}
   \sigma = \lambda \sigma+(1-\lambda)\sigma.
\end{equation}
For the latter part of $\sigma$ on the right hand side of \eqref{eq:Divisionofsigma},
we utilize the nonnegativity of the $|D^2\ue|^2$-term via the inequality of Lemma \ref{lem:TrivialInequality}:
$$ |D^2\ue|^2\geq 2|D_T|D\ue||^2+(\ilN\ue)^2. $$
We obtain a lower bound
\begin{align*}
    \sigma
    &\geq
    \Big(2+(p-2+s)\frac{|D\ue|^2}{\mu^2}\Big)|D_T|D\ue||^2 \\
    &\quad
    +\Big(1+(p-2+s)\frac{|D\ue|^2}{\mu^2}+s(p-2)\frac{|D\ue|^4}{\mu^4}\Big)(\ilN\ue)^2=:\tau.
\end{align*}
Now we have
\begin{equation} \label{eq:AbstractLowerBoundforsigma}
    \sigma\geq \lambda(\sigma-\tau)+\tau.
\end{equation}
Writing
$$ 1=\frac{|D\ue|^2}{\mu^2}+\frac{\epsilon}{\mu^2} $$
allows us to divide the terms in $\sigma$ and $\tau$ according to the degree of $\epsilon$.
This kind of regrouping is useful, because it separates the main terms that appear also in the formal calculation of Section \ref{sec:formal} from those terms that appear as a result of the regularization.

Indeed, we write
\begin{equation} \label{eq:tau}
\begin{aligned}
    \tau
    &=
    \bigg(2\Big(\frac{|D\ue|^2}{\mu^2}+\frac{\epsilon}{\mu^2}\Big)^2
    +(p-2+s)\frac{|D\ue|^2}{\mu^2}\Big(\frac{|D\ue|^2}{\mu^2}+\frac{\epsilon}{\mu^2}\Big)\bigg)|D_T|D\ue||^2 \\
    &\quad
    +\bigg(\Big(\frac{|D\ue|^2}{\mu^2}+\frac{\epsilon}{\mu^2}\Big)^2
    +(p-2+s)\frac{|D\ue|^2}{\mu^2}\Big(\frac{|D\ue|^2}{\mu^2} +\frac{\epsilon}{\mu^2}\Big) \\
    &\quad
    +s(p-2)\frac{|D\ue|^4}{\mu^4}\bigg)(\ilN\ue)^2 \\
    &=
    \Big((p+s)\frac{|D\ue|^4}{\mu^4}+(p+s+2)\frac{\epsilon|D\ue|^2}{\mu^4}+\frac{2\epsilon^2}{\mu^4}\Big)|D_T|D\ue||^2 \\
    &\quad
    +\Big((p-1)(s+1)\frac{|D\ue|^4}{\mu^4}+(p+s)\frac{\epsilon|D\ue|^2}{\mu^4}+\frac{\epsilon^2}{\mu^4}\Big)(\ilN\ue)^2
\end{aligned}
\end{equation}
and
\begin{equation} \label{eq:sigmaminustau}
\begin{aligned}
\sigma-\tau
    &=
    |D^2\ue|^2-2|D_T|D\ue||^2-(\ilN\ue)^2 \\
    &=
    |D^2\ue|^2
    -2\Big(\frac{|D\ue|^4}{\mu^4}+\frac{2\epsilon|D\ue|^2}{\mu^2}+\frac{\epsilon^2}{\mu^4}\Big)|D_T|D\ue||^2 \\
    &\quad
    -\Big(\frac{|D\ue|^4}{\mu^4}+\frac{2\epsilon|D\ue|^2}{\mu^2}+\frac{\epsilon^2}{\mu^4}\Big)(\ilN\ue)^2.
\end{aligned}
\end{equation}
As we plug \eqref{eq:tau} and \eqref{eq:sigmaminustau} into \eqref{eq:AbstractLowerBoundforsigma}, we can easily choose $\lambda=\lambda(p,s)>0$ so small that
\begin{align*}
    \sigma
    &\geq
    \lambda|D^2\ue|^2
    +\Big((p+s-2\lambda)\frac{|D\ue|^4}{\mu^4}
    +(p+s+2-4\lambda)\frac{\epsilon|D\ue|^2}{\mu^4} \\
    &\quad
    +(2-2\lambda)\frac{\epsilon^2}{\mu^4}\Big)|D_T|D\ue||^2
    +\Big(((p-1)(s+1)-\lambda)\frac{|D\ue|^4}{\mu^4} \\
    &\quad
    +(p+s-2\lambda)\frac{\epsilon|D\ue|^2}{\mu^4}
    +(1-\lambda)\frac{\epsilon^2}{\mu^4}\Big)(\ilN\ue)^2 \\
    &\geq
    \lambda|D^2\ue|^2.
\end{align*}
This is indeed possible because $s>-1$.
Now that we have shown \eqref{eq:LowerBoundforsigma}, the proof is finished.
\end{proof}

\begin{proof} [Proof of Theorem \ref{thm:MainTheorem}]
To prove Theorem \ref{thm:MainTheorem}, we need to justify letting $\epsilon\to0$ in Lemma \ref{lem:EstimateforRegularization}.
For notational convenience, we introduce the regularized version of the vector field $V_s$ which corresponds to Lemma  \ref{lem:EstimateforRegularization}. Let us define $\Ve_s\colon\Rn\to\Rn$ as
$$ \Ve_s(z):=(|z|^2+\epsilon)^{\frac{p-2+s}{4}}z \quad\text{for }z\in\Rn. $$
Similarly to \eqref{eq1}, by Lemma \ref{lem:EstimateforRegularization}, there exists a constant $C=C( p,s)>0$ such that
\begin{equation} \label{eq:RegularizedQuantitativeBoundforSecondDerivatives}
\begin{aligned}
    \int_{ U_{t_1,t_2}}|D(\Ve_s(D\ue))|^2\phi^2dxdt
    &\leq
    C\Big(\int_{ U_{t_1,t_2}}|\Ve_s(D\ue)|^2|D\phi|^2dxdt \\
    &\quad
    +\int_{ U_{t_1,t_2}}(|D\ue|^2+\epsilon)^{\frac{s+2}{2}}|\phi||\phi_t|dxdt \Big)
\end{aligned}
\end{equation}
for any $\phi\in C^{\infty}_0( U_{t_1,t_2})$.

The estimate \eqref{eq:QuantitativeBoundforSecondDerivatives} can be derived from \eqref{eq:RegularizedQuantitativeBoundforSecondDerivatives} as follows.
Let us fix a space-time point $(x_0,t_0)\in U_{t_1,t_2}$. Let $r>0$ be small enough such that the parabolic cylinder with center $(x_0,t_0)$ and radius $2r$ fits inside $ U_{t_1,t_2}$, that is $Q_{2r}\Subset U_{t_1,t_2}$. Let $\phi\in C^{\infty}_0( U_{t_1,t_2})$ be a cutoff function such that
\begin{align}\label{ConditionsForTestFunctions} \phi\equiv1\;\;\text{in }Q_r,
\quad |\phi|\leq 1,
\quad \spt \phi \subset Q_{2r},
\quad |D\phi|\leq \frac{10}{r}
\quad{\text{and}}\quad |\phi_t|\leq \frac{10}{r^2}.
\end{align}
The estimate \eqref{eq:RegularizedQuantitativeBoundforSecondDerivatives} implies that
\begin{equation} \label{eq:RegularizedQuantitativeBoundforSecondDerivativesinBall}
\int_{Q_r}|D(\Ve_s(D\ue))|^2dxdt
\leq \frac{C}{r^2}\Big(\int_{Q_{2r}}|\Ve_s(D\ue)|^2dxdt+\int_{Q_{2r}}(|D\ue|^2+\epsilon)^{\frac{s+2}{2}}dxdt\Big)
\end{equation}
for $C=C( p,s)>0$.

Since $s>-1$, we can apply for example \cite{imbertjs19} to conclude for the gradient $$\norm{D\ue}_{C^{\alpha}(Q_{{2r}})}\le C.$$
Thus  $D\ue$ converge uniformly (and strongly in $L^p$) by Arzel\`a-Ascoli theorem. It follows that the limit $u$ is a solution to (\ref{eq:p-parab}). Moreover, the right hand side of \eqref{eq:RegularizedQuantitativeBoundforSecondDerivativesinBall} is thus bounded from above by a constant independent of $\epsilon$. Thus $\{D(\Ve_s(D\ue))\}_{\epsilon}$ is bounded in $L^2(Q_{r})$, and consequently we may extract a subsequence that converges weakly in $L^2(Q_r)$. Further, using integration by parts, we see that the limit is $D(V_s(Du))$, and thus
\begin{align*}
 \int_{Q_{ r}}|D(V_s(Du))|^2dxdt&\le \liminf_{\eps\to 0} \int_{Q_{ r}}|D(\Ve_s(D\ue))|^2dxdt\\
 &\le \lim_{\eps\to 0} \frac{C}{r^2}\Big(\int_{Q_{2r}}|\Ve_s(D\ue)|^2dxdt+\int_{Q_{2r}}(|D\ue|^2+\epsilon)^{\frac{s+2}{2}}dxdt\Big)\\
 &\le \frac{C}{r^2}\Big(\int_{Q_{2r}}|V_s(Du)|^2dxdt+\int_{Q_{2r}}|Du|^ {s+2} dxdt\Big),
\end{align*}
which is the desired estimate.
\end{proof}

\section*{Acknowledgement}
The first author was supported by China Scholarship Council, no. 202006020186. The third author was supported  by  the Academy of Finland, the Centre of Excellence in Analysis and Dynamics Research and the Academy of Finland, project 308759.

\def\cprime{$'$} \def\cprime{$'$} \def\cprime{$'$}

\end{document}